\def\th@plain{%
  \upshape 
}
\renewenvironment{proof}[1][\proofname]{\par
  \pushQED{\qed}%
  \normalfont \topsep6\p@\@plus6\p@\relax
  \trivlist
  \item[\hskip\labelsep
        \bfseries
    #1\@addpunct{.}]\ignorespaces
}{%
  \popQED\endtrivlist\@endpefalse
}
\newtheorem{theorem}{Theorem}[section]
\newtheorem{lemma}[theorem]{Lemma}
\newtheorem{corollary}[theorem]{Corollary}
\newtheorem*{conjecture*}{Conjecture}
\theoremstyle{definition}
\newcommand{\etal}{et~al.\ }
\newcommand{\ie}{i.e.,\ }
\title{Relaxed DP-3-coloring of planar graphs without some cycles}
\author{Huihui Fang \qquad Tao Wang\thanks{Corresponding author: wangtao@henu.edu.cn; https://orcid.org/0000-0001-9732-1617}\\
{\small Center for Applied Mathematics, Henan University, Kaifeng, 475004, China}}
\begin{document}
\date{}
\maketitle

\begin{abstract}
Dvo\v{r}\'{a}k and Postle introduced the concept of DP-coloring to overcome some difficulties in list coloring. Sittitrai and Nakprasit combined DP-coloring and defective list coloring to define a new coloring---relaxed DP-coloring. For relaxed DP-coloring, Sribunhung \etal proved that planar graphs without 4- and 7-cycles are DP-(0, 2, 2)-colorable. Li \etal proved that planar graphs without 4, 8-cycles or 4, 9-cycles are DP-(1, 1, 1)-colorable. Lu and Zhu proved that planar graphs without 4, 5-cycles, or 4, 6-cycles, or 4, 7-cycles are DP-(1, 1, 1)-colorable. In this paper, we show that planar graphs without 4, 6-cycles or 4, 8-cycles are DP-(0, 2, 2)-colorable.

\textbf{Keywords}: DP-coloring; Defective coloring; List coloring; Relaxed-DP-coloring

\textbf{MSC2020}: 05C15
\end{abstract}

\section{Introduction}
All graphs in this paper are simple and undirected. Assume $G$ is a plane graph, we use $V(G)$, $E(G)$, $F(G)$, and $\delta(G)$ to denote its vertex set, edge set, face set, and minimum degree in the graph $G$, respectively. We use $d(x)$ to denote the degree of $x$ for each $x \in V(G) \cup F(G)$. We say that $u$ is a $d$-vertex, $d^{+}$-vertex, or $d^{-}$-vertex if $d(u) = d$, $d(u) \geq d$, or $d(u) \leq d$, respectively. Let $b(f)$ be the boundary of a face $f$ and write $f = [v_{1}v_{2}\dots v_{d}]$, where $v_{1}, v_{2}, \dots, v_{d}$ are the boundary vertices of $f$ in a cyclic order. If $d(f)=k$ ($d(f)\geq k$ or $d(f)\leq k$), then we call $f$ a $k$-face ($k^+$-face or $k^-$-face) of $G$. A face is called a {\em simple face} if its boundary is a cycle. A cycle of length $k$ is called a {\em $k$-cycle}, and a $3$-cycle is usually called as a triangle. Two cycles or faces are {\em adjacent} if they share at least one edge, or their boundaries share at least one edge, respectively. Two adjacent cycles (or faces) $C_{1}$ and $C_{2}$ are {\em normally adjacent} if $|V(C_{1})\cap V(C_{2})| = 2$. 

We say that $L$ is a $k$-list assignment for a graph $G$ if it assigns a list $L(v)$ to each vertex $v$ of $G$ with  $|L(v)| \geq k$. If $G$ has a proper coloring $\phi$ such that $\phi(v) \in L(v)$ for each vertex $v$, then we say that $G$ is {\em $L$-colorable}. A graph $G$ is {\em $k$-choosable} if it is $L$-colorable for any $k$-list assignment $L$. The {\em list chromatic number} of $G$, denoted by $\chi_{\ell}(G)$, is the smallest integer $k$ such that $G$ is $k$-choosable. 

Dvo\v{r}\'{a}k and Postle \cite{MR3758240} introduced a generalization of list coloring. Let $G$ be a graph and $L$ be a list assignment on $V(G)$. A graph $H_{L}$, simply write $H$, is said to be a {\em cover} of $G$ if it satisfies all the following two conditions. 
\begin{enumerate}[label = (\roman*)]
\item The vertex set of $H$ is $\bigcup _{u\in V(G)}(\{u\}\times L(u))=\{(u, c): u \in V(G), c\in L(u)\}$. 
\item The edge set of $H$ is $\mathscr{M} = \bigcup_{uv \in E(G)} \mathscr{M}_{uv}$, where $\mathscr{M}_{uv}$ is a matching between the sets $\{u\} \times L(u)$ and $\{v\} \times L(v)$.
\end{enumerate}
Let $T$ be a subset of $V(H)$. If $|T \cap (\{u\}\times L(u)) | = 1$ for each vertex $u$ in $G$, then $T$ is called a {\em transversal} of $H$. When a transversal is independent, it is a {\em DP-coloring}. If every cover $H$ of $G$ with a $k$-assignment $L$ has a DP-coloring, then the least number $k$ is the {\em DP-chromatic number} of $G$, denoted by $\chi_{DP}(G)$. Note that DP-coloring is a generalization of list coloring. This implies that $\chi_{\ell}(G) \leq \chi_{DP}(G)$. Chen \etal \cite{MR3996735} proved that every planar graph without 4-cycles adjacent to 6-cycles is DP-4-colorable. Recently, it is proved that every planar graph is DP-$4$-colorable if it does not contain $i$-cycles adjacent to $j$-cycles for distinct $i$ and $j$ from $\{3, 4, 5, 6\}$, see \cite{MR3969022,MR4078909,MR3996735,MR4654340}. More sufficient conditions for a planar graph to be DP-4-colorable, see \cite{MR3881665,MR4294211,MR4089638,MR4212281}. 

In \cite{MR3962013}, Sittitrai and Nakprasit combined DP-coloring and relaxed list coloring (defective list coloring) into a new coloring as follows. Let $H$ be a cover of a graph $G$ with a $k$-assignment $L$. A transversal $T$ of $H$ is a {\em $(d_{1}, d_{2}, \dots d_{k})$-coloring} if every $(v, i) \in T$ has degree at most $d_{i}$ in $H[T]$. For any $k$-assignment $L$ and any cover $H_{L}$, if $H_{L}$ has a $(d_{1}, d_{2}, \dots, d_{k})$-coloring, then we say $G$ is {\em DP-$(d_{1}, d_{2}, \dots d_{k})$-colorable}. For defective DP-coloring, we refer the readers to \cite{MR4320363,MR4161811,MR4188936}. 

Li \etal \cite{MR4557782} proved that every planar graph without 4, 8-cycles, or 4, 9-cycles is DP-(1, 1, 1)-colorable. Lu and Zhu \cite{MR4051856} proved that every planar graph without 4, 5-cycles, or 4, 6-cycles, or 4, 7-cycles is DP-(1, 1, 1)-colorable. Sribunhung \etal \cite{MR4484592} proved that every planar graph without 4, 7-cycles is DP-(0, 2, 2)-colorable. In this paper, we prove that every planar graph without 4, 6-cycles, or 4, 8-cycles is DP-(0, 2, 2)-colorable.

To prove the conclusion, we need some new definitions. Suppose that $B$ is a condition imposed on ordered vertices. A {\em DP-B-coloring} of $H_{L}$ is a transversal $T$ with ordered vertices from left to right such that each $(v, c) \in T$ satisfies condition $B$ imposed on each element of $H$. Suppose $T$ is a transversal of a cover $H$ of $G$. We say that $T$ is a {\em DP-$B_{A}$-coloring} if the vertices in $T$ can be ordered from left to right such that:

(i) For each $(v, 1)\in T$, $(v,1)$ has no neighbor on the left.

(ii) For each $(v, c)\in T$ where $c \neq 1$, $(v, c)$ has at most one neighbor on the left and that neighbor (if it exists) is adjacent to at most one vertex on the left of $(v, c)$.

We say that $G$ is {\em DP-$B_{A}$-$k$-colorable} if every cover $H_{L}$ of a graph $G$ with a $k$-assignment $L$ has a DP-$B_{A}$-coloring. 

A graph is a {\em linear forest} if it is a forest with maximum degree at most two. It is easy to prove that a transversal $T$ is a DP-$B_{A}$-coloring only if $H[T]$ is a linear forest and $\{(v, 1): (v, 1) \in T\}$ is independent in $H$. But the inverse is not true in general. For example, let $G$ be a triangle $xyz$ and $T = \{(x, 1), (y, 2), (z, 1)\}$, where $(y, 2)$ is adjacent to $(x, 1)$ and $(z, 1)$ in a cover $H$. Observe that this $T$ has no desired ordering as in the definition DP-$B_{A}$-coloring. 

\begin{theorem}\label{DECOM}
Every planar graph without $4$- and $8$-cycles is DP-$B_{A}$-$3$-colorable.
\end{theorem}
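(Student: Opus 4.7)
The plan is to argue by a minimum counterexample together with the discharging method. Assume the theorem fails and let $G$ be a counterexample minimizing $|V(G)|+|E(G)|$, together with a $3$-list assignment $L$ and a cover $H$ of $G$ for which no DP-$B_{A}$-coloring exists. Fix a planar embedding; by minimality, every proper subgraph of $G$ (with the induced sub-cover) is DP-$B_{A}$-$3$-colorable.

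First I would establish a collection of reducible configurations. For each candidate configuration $C\subseteq G$, I would delete an appropriately chosen subset $S\subseteq V(C)$, apply minimality to obtain a DP-$B_{A}$-coloring $T'$ of $G-S$, and then extend by appending the vertices of $S$ (in a chosen order) to the right end of the existing order. Appending $v$ with colour $c$ is legal either when $c=1$ and $v$ is matched to no element of $T'$, or when $c\neq 1$ and the unique matched partner $u\in T'$ already has left-degree at most one. The three-element list for each $v$ ensures that, provided $C$ is structurally light (few already-used neighbours, small degrees, bounded left-degree of the candidate parents), a legal colour always exists. Expected reducible configurations include $\delta(G)\ge 3$, bounds on how many low-degree neighbours a 3-vertex may have, and obstructions involving clusters of 3-faces and 5-faces around a common vertex or edge.

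Next I would run a discharging argument. Assign initial charges $\mu(x)=d(x)-4$ to every $x\in V(G)\cup F(G)$, so that Euler's formula gives $\sum_{x}\mu(x)=-8$; only $3$-vertices and $3$-faces start negative. The absence of $4$-cycles forces any two triangles to share at most one vertex and forces every face adjacent to a triangle to be normally adjacent to it. The absence of $8$-cycles additionally forbids a triangle from being adjacent to a $7$-face and forbids two $5$-faces from sharing an edge. Using these structural facts, I would design rules pushing surplus from $5^{+}$-faces and high-degree vertices to $3$-faces and $3$-vertices, and verify, invoking the reducible configurations already ruled out, that every face and every vertex finishes with non-negative charge; the contradiction with $\sum_{x}\mu(x)=-8$ finishes the proof.

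The main obstacle will be the reducibility step. In ordinary DP-$3$-coloring one need only choose a colour avoiding a bounded number of matched neighbours, but DP-$B_{A}$-coloring imposes a second-order ordering constraint, so appending $v$ with a colour $c\neq 1$ is blocked whenever the matched partner in $T'$ is already saturated by a left-neighbour. Choosing $S$ so that this double filter never kills all three colours, and aligning those structural demands with the discharging rules so that both halves of the argument close simultaneously, will be the delicate part. I also anticipate configurations where mere appending is insufficient and one must permit a small local re-ordering of $T'$ to absorb $v$; demonstrating that such local re-orderings are always available is likely to account for much of the technical bookkeeping.
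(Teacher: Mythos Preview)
Your proposal is correct and follows essentially the same route as the paper: a minimum counterexample, the reducibility facts $\delta(G)\ge 3$ and the $3$-vertex neighbour lemma (which the paper quotes from Nakprasit \etal rather than reproving), the structural face-adjacency restrictions you list, and discharging with $\mu(x)=d(x)-4$. The only substantive detail you do not anticipate is the paper's analysis of the \emph{special} $3$-vertex (incident with a $3$-, $5$-, and $6$-face) and the extra rule that drains residual $5$-face charge to it; conversely, your worry about needing local re-orderings of $T'$ turns out to be unnecessary, as the two cited reducibility lemmas suffice.
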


\begin{corollary}\label{Cor1}
If $G$ is a planar graph without $4$- and $8$-cycles, then
\begin{enumerate}[label = (\roman*)]
    \item\label{c.1} \sout{$G$ is DP-$(0, 2, 2)$-colorable.}
    \item $V(G)$ can be partitioned into three sets in which each of them induces a linear forest and one of them is an independent set. 
\end{enumerate}
\end{corollary}

\begin{theorem}\label{AB}
Every planar graph without $4$- and $6$-cycles is DP-$B_{A}$-$3$-colorable.
\end{theorem}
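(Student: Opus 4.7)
The plan is to mirror the minimum-counterexample and discharging scheme used for Theorem~\ref{DECOM}, but re-tool the reducible configurations and the local rules for the setting where $4$- and $6$-cycles (rather than $4$- and $8$-cycles) are forbidden. Take a counterexample $G$ minimizing $|V(G)|+|E(G)|$, fix a $3$-assignment $L$ and a cover $H_L$ that admits no DP-$B_A$-coloring, and first verify $\delta(G)\ge 3$: if $v$ has degree at most $2$, then the restriction of $H_L$ to $G-v$ admits a DP-$B_A$-coloring by minimality, and $v$ can be appended to the right end of the ordering with a color not blocked by either of its at most two matching edges, used as color~$1$. A list of further reducible configurations (typical candidates are short paths of $3$-vertices, $3$-vertices incident to $3$-faces, and $3$-vertices with two $3$-neighbors on a $5$-face) is then obtained by the same template: peel off a carefully chosen subgraph $S$, DP-$B_A$-color $G-S$ by minimality, and reinsert the vertices of $S$ at the right end in an order compatible with the DP-$B_A$ constraints --- the ``no left neighbor'' requirement for color~$1$ and the ``grandparent has at most one left neighbor'' requirement for colors~$2,3$.

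Next I would exploit the forbidden cycles structurally. Since a triangle sharing an edge with an $n$-cycle creates an $(n+1)$-cycle, forbidding $4$- and $6$-cycles forces every $3$-face of $G$ to be edge-adjacent only to $7^+$-faces: it cannot meet a $3$-face (that would give a $4$-cycle), a $4$- or $6$-face (forbidden), or a $5$-face (that would give a $6$-cycle). Triangles are therefore edge-disjoint and well isolated among the small faces, which is the structural backbone for the discharging.

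For discharging I would assign $\mu(x)=d(x)-4$ so that Euler's formula gives total charge $-8$, and try rules of the standard shape: each $5^+$-vertex donates a fixed amount to every incident $3$-face; every $7^+$-face donates to edge-adjacent $3$-faces and to each incident $3$-vertex; $5$-faces handle only their own incident low-degree vertices. The verification is a case analysis on $d(v)$ and $d(f)$, bounding the number of $3$-faces around each high-degree vertex, the number of $3$-vertices on each $5$- or $7^+$-face, and the number of heavy faces forced to surround each triangle. Showing $\mu^{*}(x)\ge 0$ everywhere then contradicts the total charge being $-8$ and yields the theorem.

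The step I expect to be hardest is the local balance at $3$-vertices that lie simultaneously on a triangle and on a $5$-face: the triangle's deficit must be routed from a ``distant'' $7^+$-face since a triangle's only edge-adjacent faces are $7^+$, the $3$-vertex is itself a sink, and the $5$-face has only one unit to spread. Getting the rules tight enough that the $7^+$-faces can both feed their adjacent triangles \emph{and} cover the incident $3$-vertices, while $5$-faces carrying several $3$-vertices remain non-negative, is where the adaptation from the no-$8$-cycle case (Theorem~\ref{DECOM}) to the present no-$6$-cycle case will require the most technical care, because forbidding $6$-cycles rather than $8$-cycles changes exactly which local patterns can occur around a $5$-face and so changes the fine-grained case analysis that the whole argument hinges on.
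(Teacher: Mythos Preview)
Your skeleton matches the paper's --- minimum counterexample, $\delta(G)\ge 3$, face-adjacency lemmas, discharging with $\mu(x)=d(x)-4$ --- but the configuration you flag as hardest cannot occur, and the one reducibility lemma that actually matters is left vague. A $3$-vertex lying on both a $3$-face and a $5$-face is impossible here, and you already have the reason in hand: if $v$ is a $3$-vertex on a $3$-face $f$, then two of $v$'s three edges lie on $f$, so the two remaining faces at $v$ are edge-adjacent to $f$ and hence $7^{+}$-faces by your own observation. The no-$6$-cycle case is therefore strictly \emph{simpler} than Theorem~\ref{DECOM}, not harder: there are no ``special $3$-vertices'' and no analogue of rule~\ref{R6} or Lemma~\ref{beta} is needed.

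What actually carries the discharging is Lemma~\ref{BADNEIGHBOR}, which you only gesture at as ``short paths of $3$-vertices'': a $3$-vertex with a $3$-neighbor has two $5^{+}$-neighbors, and any such $5$-neighbor has a $4^{+}$-neighbor. The paper uses this to split $3$-vertices into good and bad, sends $\tfrac14$ from each $5^{+}$-vertex to each adjacent \emph{bad $3$-vertex} (not from vertices to incident $3$-faces as you propose; the three $7^{+}$-faces around a triangle already cover its $-1$ via $\tfrac13$ each), and has faces pay differentiated rates ($\tfrac13$ vs.\ $\tfrac16$ from a $5$-face, $\tfrac12$ vs.\ $\tfrac14$ from a $6^{+}$-face) to good vs.\ bad $3$-vertices. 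Without this good/bad split the $5$-face balance fails --- a $5$-face cannot afford $\tfrac13$ to each of five incident $3$-vertices --- but Lemma~\ref{BADNEIGHBOR} caps the count at three with at least two of them bad, and the bad ones are subsidized by their $5^{+}$-neighbors so that $\tfrac16$ each from the face suffices.
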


\begin{corollary}\label{Cor2}
If $G$ is a planar graph without $4$- and $6$-cycles, then
\begin{enumerate}[label = (\roman*)]
    \item\label{c.2} \sout{$G$ is DP-$(0, 2, 2)$-colorable.}
    \item $V(G)$ can be partitioned into three sets in which each of them induces a linear forest and one of them is an independent set. 
\end{enumerate}
\end{corollary}

In order to prove results on DP-$B_{A}$-$3$-colorable graphs, Sribunhung \etal \cite{MR4484592} gave some structural results. 

\begin{lemma}[Sribunhung \etal \cite{MR4484592}]\label{delta}
If $G$ is not DP-$B_{A}$-$3$-colorable, but all its proper induced subgraphs are DP-$B_{A}$-$3$-colorable, then $\delta(G) \geq 3$.
\end{lemma}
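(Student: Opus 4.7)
The plan is to argue by minimality. Assume for contradiction that $\delta(G) \le 2$, pick $v \in V(G)$ with $d(v) \le 2$, and show that any DP-$B_{A}$-coloring of $G - v$ must extend to one of $G$, contradicting the hypothesis. Fix any cover $H$ of $G$ with a $3$-assignment $L$. Since $G - v$ is a proper induced subgraph, the minimality hypothesis furnishes a DP-$B_{A}$-coloring $T'$ of $G - v$ in the cover $H - (\{v\} \times L(v))$, together with an admissible left-to-right ordering $\sigma'$.

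The extension strategy is to choose a color $c \in L(v)$ carefully and then insert $(v, c)$ at the leftmost end of the new ordering. The choice of $c$ rests on a short counting argument. Any $H$-edge incident to $(v, c)$ lies in some matching $\mathscr{M}_{vu_{i}}$ with $u_{i}$ a neighbor of $v$ in $G$, and each such matching joins $(u_{i}, c_{u_{i}}) \in T'$ to at most one vertex of $\{v\} \times L(v)$; hence at most $d(v) \le 2$ colors of $L(v)$ are ``forbidden''. Since $|L(v)| \ge 3$, some $c$ exists for which $(v, c)$ has no $H$-neighbor in $T'$ at all.

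With this $c$, placing $(v, c)$ at the left end of the new ordering trivially satisfies conditions (1) and (2) at $(v, c)$ itself, since no vertex lies to its left. For every other $(w, c_{w}) \in T'$, the set of $H$-neighbors of $(w, c_{w})$ lying to its left is unchanged by the insertion, because $(v, c)$ is not $H$-adjacent to $(w, c_{w})$. Moreover, for each $(w, c_{w})$ with $c_{w} \ne 1$, the ``that neighbor'' referenced in condition (2) does not acquire $(v, c)$ as a new left-neighbor of its own, again because $(v, c)$ is isolated in $T' \cup \{(v, c)\}$. Hence the extension is a DP-$B_{A}$-coloring of $G$, the desired contradiction. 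The only genuinely delicate point is the second-order clause in condition (2), which constrains not only direct adjacencies but also neighbors-of-neighbors to the left; the isolated-leftmost placement handles both layers simultaneously, so I do not anticipate any serious technical obstacle.
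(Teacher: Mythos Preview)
Your argument is correct. Choosing $c \in L(v)$ so that $(v,c)$ has no $H$-neighbor in $T'$ is possible since $d(v)\le 2<|L(v)|$, and inserting an isolated vertex at the leftmost position cannot create any new left-adjacencies for the remaining elements of the transversal, so both clauses of the $B_A$ condition are preserved verbatim; your observation about the second-order clause is exactly the point that needs checking, and isolation handles it.

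As for comparison: the paper does not supply a proof of this lemma. It is quoted from Nakprasit, Nakprasit, Sittitrai and Sribunhung~\cite{Nakprasit2021} and used as a black box, so there is no in-paper argument to weigh yours against. Your proof is the natural minimality-and-extension argument one would expect for a statement of this type. (Incidentally, placing $(v,c)$ at the \emph{rightmost} end would work just as well and perhaps slightly more transparently: then no other vertex sees anything new to its left, and $(v,c)$ itself has no left-neighbor by the choice of $c$.)
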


\begin{lemma}[Sribunhung \etal \cite{MR4484592}]\label{BADNEIGHBOR}
Suppose $G$ is not DP-$B_{A}$-$3$-colorable, but all its proper induced subgraphs are DP-$B_{A}$-$3$-colorable. If a $3$-vertex $u$ in $G$ is adjacent to a $3$-vertex, then $u$ has two $5^+$-neighbors. Moreover, if $x$ is a $5$-neighbor of $u$, then $x$ has a $4^+$-neighbor.
\end{lemma}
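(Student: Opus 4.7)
The plan is a minimum-counterexample argument. Assume $G$ is not DP-$B_{A}$-$3$-colorable but every proper induced subgraph is, and let $u$ be a $3$-vertex adjacent to a $3$-vertex $v$, with $N(u) = \{v, w_1, w_2\}$. For the main claim, suppose for contradiction that $u$ has at most one $5^+$-neighbor, so WLOG $d(w_1) \leq 4$. Fix a bad $3$-assignment $L$ and cover $H$ witnessing that $G$ has no DP-$B_{A}$-coloring. By minimality, $G - u$ (with the inherited $L$ and cover) admits a DP-$B_{A}$-$3$-coloring: a transversal $T'$ with left-to-right ordering $\sigma$ satisfying conditions~(1) and~(2). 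Recall that $H[T']$ is then a linear forest and its color-$1$ vertices form an independent set in $H$.

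I would try to extend $T'$ by inserting $u$ into $\sigma$ with some color $c \in L(u)$. Each of the three neighbors of $u$ blocks, via its chosen color and the matchings of $H$, at most one position in $\{u\} \times L(u)$, so at most three of $u$'s three positions are forbidden. If $(u, 1)$ is not forbidden, take $c = 1$ and place $u$ leftmost: then $(u,1)$ has no $H$-neighbor in $T' \cup \{(u,1)\}$, so condition~(1) holds and no vertex to the right is affected because $(u, 1)$ is non-adjacent in $H$ to every $(w, c_w)$ with $w \in N(u)$. Otherwise some color $c \in \{2, 3\}$ is blocked by at most one neighbor $y$, and the natural move is to insert $(u, c)$ immediately to the right of $(y, c_y)$ in $\sigma$. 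Since $H[T']$ is a linear forest of maximum degree $\leq 2$, $(y, c_y)$ has at most two $T'$-neighbors; condition~(2) for $(u, c)$ then demands only that $(y, c_y)$ have at most one $H$-neighbor to the left of the insertion point, which equals the number of left neighbors $(y, c_y)$ had in $T'$.

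The subtlety is that if $(y, c_y)$ also has a $T'$-right-neighbor $(z, c_z)$, inserting $u$ between them makes $(u, c)$ a new left neighbor of $(y, c_y)$ relative to $(z, c_z)$, which can break condition~(2) for $(z, c_z)$. In this case I would shift the insertion past $(z, c_z)$ or pick a different $c$. If no placement works, the remedy is to uncolor $v$ along with $u$: because $v$ is a $3$-vertex with only two neighbors in $G - u$, minimality applied to $G - \{u, v\}$ yields a DP-$B_{A}$-coloring; we then extend first to $v$ (at least one of its three list-colors is unblocked by its two colored neighbors) and then to $u$, using the extra freedom in $v$'s color to unblock whichever matching obstructed $u$. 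The main obstacle is bookkeeping this cascade through the linear-forest components of $H[T']$ meeting $N(u) \cup \{v\}$; the hypotheses $d(v) = 3$ and $d(w_1) \leq 4$ bound the complexity of those components enough to force a contradiction.

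For the moreover statement, if some $5$-neighbor $x$ of $u$ had all of $N(x) \setminus \{u\}$ of degree $\leq 3$, the same delete-and-extend strategy applied to $G - \{u, x\}$ gives a contradiction: the $3$-lists at $x$ and $u$, combined with the small degree sum around $x$, leave enough slack to extend the DP-$B_{A}$-coloring of $G - \{u, x\}$ first to $x$ and then to $u$ without violating condition~(1) or condition~(2), so $G$ would be DP-$B_{A}$-$3$-colorable after all.
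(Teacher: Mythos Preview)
The paper does not prove this lemma at all; it is quoted verbatim from \cite{Nakprasit2021} and used as a black box. So there is no ``paper's own proof'' to compare against here, and your attempt must stand on its own.

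As written, it does not. You have correctly identified the framework (minimality plus delete-and-extend, appending the restored vertices to the right end of the ordering $\sigma$), but the execution has a real gap: you never actually use the hypothesis $d(w_{1})\le 4$. The sentence ``the hypotheses $d(v)=3$ and $d(w_{1})\le 4$ bound the complexity of those components enough to force a contradiction'' is where the entire content of the proof would have to live, and you have not supplied it. Concretely: when you try to place $(u,c)$ with $c\neq 1$ at the right end of the ordering, condition~(2) requires not only that $(u,c)$ have a unique $T'$-neighbour $(y,c_{y})$ but also that $(y,c_{y})$ have degree at most one in $H[T']$. Nothing in your argument controls that degree; the linear-forest property only gives degree $\le 2$, which is not enough. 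The degree bounds on $v$ and $w_{1}$ are precisely what, after deleting a suitable set and possibly recolouring, force the relevant $(y,c_{y})$ down to degree $\le 1$ --- but you have to show this, and your proposal stops short of doing so. Your idea of inserting $(u,c)$ in the \emph{middle} of $\sigma$ (just right of $(y,c_{y})$) is also a misstep: it can raise the left-degree of $(y,c_{y})$ as seen from later vertices, and your fallback ``shift past $(z,c_{z})$ or pick a different $c$'' is not an argument.

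The ``moreover'' clause is in worse shape: you delete $\{u,x\}$ and assert that ``the small degree sum around $x$'' leaves enough slack, but $x$ has four remaining colored neighbours against a list of size three, so a naive extension is impossible; you need an argument that exploits the fact that all of $x$'s neighbours are $3$-vertices, and none is given.
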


We say that a $3$-vertex is \emph{bad} if it is adjacent to another $3$-vertex; otherwise, it is a \emph{good} $3$-vertex. 

\section{Plane graphs without 4- and 8-cycles}
Firstly, we give some structural results on plane graphs without $4$- and $8$-cycles. 
\begin{lemma}\label{ADJACENCY}
Let $G$ be a plane graph without $4$- and $8$-cycles. Then the following statements hold.
\begin{enumerate}[label = (\roman*)]
    \item\label{3F3}
    There are no adjacent $3$-faces.
    \item\label{3F5}
    If a $3$-face is adjacent to a $5$-face, then they are normally adjacent. 
    \item\label{3F6}
    If $\delta(G)\geq 3$ and a $3$-face is adjacent to a $6$-face, then they are normally adjacent.
    \item\label{3F7} 
    If $\delta(G)\geq 3$, then each $7$-face is not adjacent to any $3$-face.
    \item\label{5F5} 
    If $\delta(G)\geq3$, then there are no adjacent $5$-faces.
    \item\label{5F33} 
    If $\delta(G)\geq3$, then each $5$-face is adjacent to at most two $3$-faces.
    \item\label{6F3} 
    If $\delta(G)\geq3$, then each $6$-face is adjacent to at most one $3$-face. 
\end{enumerate}
\end{lemma}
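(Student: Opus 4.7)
Proof proposal. The plan is to prove each of (i)-(vii) by contradiction, using the absence of $4$- and $8$-cycles in $G$, together with $\delta(G)\ge 3$ for (iii)-(vii). The uniform tool is a \emph{splicing} argument: given a face $f$ and an adjacent 3-face $[uvw]$ sharing the edge $uv$ with $f$, one can replace $uv$ on the boundary of $f$ by the length-$2$ detour $u\to w\to v$, obtaining a closed walk of length $d(f)+1$. Chaining several such splices, or combining a single 3-face edge with a short arc of $f$, produces closed walks of short, controllable lengths whose being a $4$- or $8$-cycle gives the contradiction.

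For (i), two 3-faces on a common edge $uv$ with distinct apex vertices $w_1,w_2$ immediately yield the 4-cycle $uw_1vw_2$. For (ii)-(iv), I assume a 3-face $[uvw]$ and a face $f$ of length $\ell\in\{5,6,7\}$ share all three of $u,v,w$, and enumerate the relative positions of $u,v,w$ on the $\ell$-cycle up to rotation and reflection. The \emph{spread} positions (one shared edge, with the other two 3-face edges acting as chords of $f$) produce a length-$3$ arc of $f$ that closes a 4-cycle together with one of the chord edges. The \emph{clustered} positions (two 3-face edges shared, meeting at a vertex $x\in\{u,v,w\}$) pinch $x$ between $f$ and the 3-face and leave no room for any further incident edge, forcing $d(x)=2$; this is where $\delta(G)\ge 3$ is invoked for (iii) and (iv). For (iv) there is one extra case, $w\notin V(f)$: the path $u\to w\to v$ together with the length-$6$ arc of $f$ from $v$ back to $u$ closes an 8-cycle.

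For (v), two adjacent 5-faces that share exactly one edge produce an 8-cycle by concatenating their complementary length-$4$ arcs. Any extra shared edge either pinches an interior shared vertex to degree $2$ (killed by $\delta\ge 3$ when the shared edges are consecutive) or forms a 4-cycle through non-consecutive shared edges. For (vi) and (vii), by (i) the 3-faces adjacent to the 5-face (resp.\ 6-face) $f$ sit on pairwise distinct edges of $f$, and by (ii) and (iii) each is normally adjacent to $f$ with its apex outside $V(f)$. Up to symmetry there are two edge-patterns on a 5-face and three on a 6-face; in each pattern, splicing every apex into its corresponding edge of $f$ replaces $k$ edges of $f$ by length-$2$ detours, producing a closed walk of length $d(f)+k$. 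For $(d(f),k)=(5,3)$ and $(6,2)$ this length is exactly $8$.

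The main obstacle, especially in (vi) and (vii), is verifying that the spliced closed walks are genuine cycles rather than shorter walks with repeated vertices. This reduces to ruling out coincidences between the apex vertices of the various 3-faces (and between an apex vertex and a vertex of $f$, but the latter is prevented by normal adjacency). Two coinciding apices either force two 3-faces to share an edge (violating (i)) or produce a 4-cycle through the common apex, so they can be excluded case by case. Once these distinctness checks are made, the length-$8$ closed walks are honest 8-cycles and close all the contradictions.
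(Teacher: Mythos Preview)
Your splicing framework is essentially the paper's argument, and the overall plan is sound. Two genuine case omissions would make the proof incomplete as written.

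First, in (iii), (iv), and implicitly (vii), you enumerate positions ``on the $\ell$-cycle'', tacitly assuming the $\ell$-face is simple. This is not automatic. A $7$-face whose boundary walk is not a $7$-cycle would have to contain a $4$-cycle, so that case dies immediately---but you should say so before invoking the length-$6$ arc in (iv). More substantively, a $6$-face can be non-simple here: nothing forbids $6$-cycles, and the boundary walk can consist of two triangles meeting at a cut vertex. Your position enumeration does not apply to that configuration. The paper dispatches it separately by observing that if such a $6$-face were adjacent to a $3$-face, one of its two boundary triangles would bound that $3$-face, forcing the other two vertices of that triangle to have degree~$2$, contradicting $\delta(G)\ge 3$. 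Once non-simple faces are excluded, your spread/clustered dichotomy goes through.

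Second, in (v) you split on the number of shared \emph{edges}, but the obstruction to the concatenated length-$8$ walk being a genuine $8$-cycle is a repeated \emph{vertex}. Two $5$-faces $[v_1v_2v_3v_4v_5]$ and $[v_1v_2u_3u_4u_5]$ can share exactly the edge $v_1v_2$ and still have $u_4=v_4$; this collapses your $8$-cycle without falling under either of your ``extra shared edge'' cases. The correct (and short) fix, which the paper uses, is to rule out vertex coincidences directly: $u_3,u_5\notin\{v_3,v_4,v_5\}$ since the $5$-cycles are chordless and $\delta\ge 3$, and symmetrically $v_3,v_5\notin\{u_3,u_4,u_5\}$; the only surviving coincidence $u_4=v_4$ then yields the $4$-cycle $v_2v_3v_4u_3$.
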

\begin{proof}
\ref{3F3} If two $3$-faces are adjacent, then $G$ has a $4$-cycle, a contradiction.

\ref{3F5} Suppose to the contrary that a $5$-face $[v_{1}v_{2}v_{3}v_{4}v_{5}]$ is adjacent to a $3$-face $[v_1v_2u]$. Since they are not normally adjacent, $u \in \{v_{3}, v_{4}, v_{5}\}$. Then the $5$-cycle has a chord, a contradiction. 

\ref{3F6} Suppose that a $6$-face $f$ is not a simple face. Then its boundary consists of two triangles. Let $f = [u'vuwvw']$ be a $6$-face, where $[uvw]$ and $[u'vw']$ are two triangles. Observe that $G$ has no adjacent triangles. Suppose that $f$ is adjacent to a $3$-face. Then either $[uvw]$ or $[u'vw']$ bounds a $3$-face, and then there are at least two $2$-vertices, a contradiction. 

So we may assume that the $6$-face $f$ is a simple face. Suppose to the contrary that $f = [v_{1}v_{2}v_{3}v_{4}v_{5}v_6]$ is not normally adjacent to a $3$-face $[v_1v_2u]$. Then $u \in \{v_{3}, v_{4}, v_{5}, v_{6}\}$. By symmetry, we need to consider two cases: $u = v_{3}$ or $u = v_{4}$. If $u = v_{4}$, then $[v_{1}v_{2}v_{3}v_{4}]$ is a $4$-cycle, a contradiction. It follows that $u=v_3$. Since $[v_{1}v_{2}v_{3}]$ is a 3-face, we have that $v_{2}$ is a $2$-vertex, a contradiction. 
                                                                                                                                                                                                                                                                                                                                                                                                                                                                                                                                                                                                                                                                                                                                                                                                                                                                                                                                                                                                                                                                                                                                                                                                                                                                                                                                                                                                                                                                                                                                                                                                                                                                                                                                                                                                                                                                                                                                                                                                                                                                                                                                                                                                                                                                                                                                                                                                                                          
\ref{3F7} Assume that a $7$-face $f_{1}$ is adjacent to a $3$-face $f_{2}$. Observe that $f_{1}$ must be a simple face; otherwise, there is a $4$-cycle in the boundary of $f_{1}$, a contradiction. Since $\delta(G) \geq 3$ and $G$ does not have $4$-cycle, $f_{1}$ and $f_{2}$ are normally adjacent. Now, $b(f_{1}) \cup b(f_{2})$ contains an $8$-cycle, a contradiction. 

\ref{5F5} Suppose to the contrary that a $5$-face $[v_{1}v_{2}v_{3}v_{4}v_{5}]$ is adjacent to a $5$-face $[v_{1}v_{2}uvw]$. Since there is no 8-cycle, $\{u, v, w\} \cap \{v_{3}, v_{4}, v_{5}\} \neq \emptyset$. Since $\delta(G) \geq 3$ and $[v_{1}v_{2}v_{3}v_{4}v_{5}]$ has no chord, we have $\{u, w\} \cap \{v_{3}, v_{4}, v_{5}\} = \emptyset$. By symmetry, we can obtain that $\{v_{3}, v_{5}\} \cap \{u, v, w\} = \emptyset$. If $v = v_{4}$, then $[vuv_{2}v_{3}]$ is a $4$-cycle, a contradiction.   

\ref{5F33} Suppose to the contrary that a $5$-face $f$ is adjacent to three $3$-faces. If those three $3$-faces share vertices outside $f$, then $G$ has a $4$-cycle, a contradiction. Then the boundaries of these four faces form an $8$-cycle, a contradiction. Thus, each $5$-face is adjacent to at most two $3$-faces.

\ref{6F3} Suppose to the contrary that a $6$-face $f$ is adjacent to two $3$-faces. If those two $3$-faces share vertices outside $f$, then $G$ has a $4$-cycle, a contradiction. Then the boundaries of these three faces form an $8$-cycle, a contradiction. Thus, each $6$-face is adjacent to at most one $3$-face.
\end{proof}

Next, we prove the main result---\autoref{DECOM}.

Suppose to the contrary that $G$ is a minimum counterexample to the statement. By \autoref{delta}, the minimum degree of $G$ is at least three. 

A $3$-vertex is {\em special} if it is incident with a $3$-face, a $5$-face, and a $6$-face.  

\begin{lemma}\label{SPECIALVERTEX}
Let $v$ be a $3$-vertex. If $v$ is incident with a $3$-face $f_{1} = [vv_{1}v_{2}]$, a $5$-face $f_{2} = [vv_{2}v_{3}v_{4}v_{5}]$, and a $6$-face $f_{3} = [vv_{5}v_{6}v_{7}v_{8}v_{1}]$, then each of the following holds. 
\begin{enumerate}[label = (\roman*)]
\item\label{SPECIALVERTEX1} There is only one possibility for the special $3$-vertex $v$, as shown in \autoref{SPECIAL3VERTEX}, where $v_{4}$ and $v_{7}$ are identical. Furthermore, $f_{2}$ is adjacent to exactly one $3$-face, say $f_{1}$. 
\item\label{SPECIALVERTEX2} There is no other special $3$-vertex on the boundary of $f_{2}$. 
\end{enumerate}
\end{lemma}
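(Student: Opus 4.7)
The plan is to establish part (1) by observing that the boundaries of $f_1, f_2, f_3$ jointly outline a closed walk $v_1v_2v_3v_4v_5v_6v_7v_8$ of length $8$, which must degenerate through exactly one vertex identification, and the only identification consistent with the hypotheses is $v_4 = v_7$; the ``exactly one $3$-face'' statement then follows from \autoref{ADJACENCY} applied to $f_3$ together with short-cycle arguments on the remaining edges of $f_2$. Part (2) will follow because any extra special $3$-vertex on $f_2$ or $f_3$ must be incident with the unique $3$-face adjacent to that face (namely $f_1$), which restricts the candidate to $v_2$ or $v_1$; the third incident face at this candidate then combines with $f_1, f_2, f_3$ to trace an $8$-cycle.

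For part (1), I would start by noting that if no identification $v_i = v_j$ occurs with $i \in \{2,3,4\}$ and $j \in \{6,7,8\}$, then $v_1v_2v_3v_4v_5v_6v_7v_8v_1$ is a forbidden $8$-cycle, so some such identification must hold. A nine-case enumeration proceeds as follows: six of them ($v_2 = v_6$, $v_2 = v_7$, $v_3 = v_6$, $v_3 = v_7$, $v_3 = v_8$, $v_4 = v_8$) each expose an explicit $4$-cycle, while the two cases $v_2 = v_8$ and $v_4 = v_6$ force $f_1$ and $f_3$ (respectively $f_2$ and $f_3$) to share two consecutive edges at $v_1$ (respectively $v_5$), yielding a $2$-vertex. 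Only $v_4 = v_7$ survives, turning $v_4v_5v_6$ into a triangle. For the second assertion, the adjacency $f_1 \sim f_3$ combined with \autoref{ADJACENCY}\ref{6F3} rules out $3$-faces on the edges $v_5v_6, v_6v_4, v_4v_8, v_8v_1$. Any $3$-face $[v_4v_5y]$ either coincides with $[v_4v_5v_6]$ (forcing $d(v_6)=2$) or produces the $4$-cycle $yv_5v_6v_4$; while a $3$-face on $v_2v_3$ or $v_3v_4$ with new third vertex $x$ yields an $8$-cycle such as $vv_1v_2xv_3v_4v_6v_5$ or $vv_1v_2v_3xv_4v_6v_5$ respectively, after checking that $x$ cannot coincide with any labelled vertex without producing a shorter forbidden cycle or an adjacency contradiction.

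For part (2), an extra special $3$-vertex $u$ on $f_2$ is incident with a $3$-face adjacent to $f_2$, and by part (1) this $3$-face must be $f_1$, so $u \in V(f_1)\cap V(f_2)\setminus\{v\} = \{v_2\}$. If $v_2$ were special, its third incident face (between $v_1v_2$ and $v_2v_3$) would be a $6$-face $f_4 = [v_1v_2v_3w_1w_2w_3]$; the outer boundary of $f_1 \cup f_2 \cup f_4$ (obtained by subtracting the three pairwise-shared edges twice from the sum of perimeters) has length $3+5+6-6 = 8$ and gives the $8$-cycle $vv_5v_4v_3w_1w_2w_3v_1$, once one verifies that the $w_i$ are new. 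A parallel argument on $f_3$ forces the only candidate to be $v_1$; if special, its third face is a $5$-face $f_5 = [v_1v_2u_1u_2v_8]$, and the outer boundary of $f_1 \cup f_3 \cup f_5$ gives the $8$-cycle $vv_2u_1u_2v_8v_4v_6v_5$. The main obstacle is the bookkeeping of coincidences of the auxiliary vertices ($x, y, w_i, u_i$) with the labelled ones, together with possibly non-simple auxiliary faces; each sub-case produces a forbidden short cycle or a configuration incompatible with \autoref{ADJACENCY}.
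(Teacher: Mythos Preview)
Your treatment of part~(1) is essentially the paper's argument, organized as a slightly larger case enumeration (the paper first invokes the normal-adjacency items of \autoref{ADJACENCY} to reduce to identifications with $i\in\{3,4\}$, but the extra three cases you list are disposed of just as easily).

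For part~(2) the paper proceeds differently and more economically: rather than assembling an outer $8$-walk, it \emph{re-applies part~(1)} to the candidate special vertex. Applied to $v_2$ (with $6$-face $f_4=[v_1v_2v_3w_1w_2w_3]$), part~(1) forces $w_2=v_4$ and hence places the edge $v_3v_4$ in the triangle $[v_3v_4w_1]$, contradicting the ``no other edge of $f_2$ lies in a triangle'' clause already established in~(1). Applied to $v_1$ (with $5$-face $f_5=[v_1v_2u_1u_2v_8]$), part~(1) forces $u_2=v_6$; the paper then rules this out by a Jordan-curve argument, since $u_2$ and $v_6$ lie on opposite sides of the $5$-cycle $v_8v_1v_2v_3v_4$.

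Your outer-boundary method runs head-on into precisely these forced identifications: with $w_2=v_4$ your walk $vv_5v_4v_3w_1w_2w_3v_1$ revisits $v_4$, and with $u_2=v_6$ your walk $vv_2u_1u_2v_8v_4v_6v_5$ revisits $v_6$. So in the two decisive sub-cases the contradiction is \emph{not} the $8$-cycle you wrote down. The $v_2$ sub-case still collapses (the triangle $[v_3v_4w_1]$ appears, contradicting part~(1) rather than \autoref{ADJACENCY}), but the $v_1$ sub-case with $u_2=v_6$ yields from your walk only the harmless triangle $[v_6v_7v_8]$ and a $5$-cycle; neither a ``forbidden short cycle'' nor \autoref{ADJACENCY} disposes of it directly. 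You would need either the paper's planarity observation or a genuinely different $8$-cycle (for instance $v\,v_1\,v_2\,u_1\,v_6\,v_8\,v_4\,v_5$, using the new edge $v_6v_8=u_2v_8$), and your sketch does not supply this step.
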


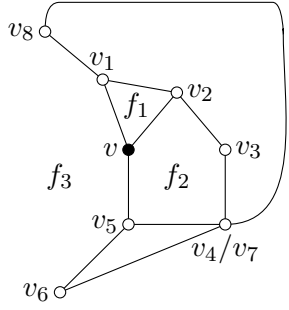
\begin{figure}
\centering
\begin{tikzpicture}[scale = 0.7]
\def\s{1}
\coordinate (v) at (135:\s);
\coordinate (v5) at (225:\s);
\coordinate (v2) at ($(v)!1!140:(v5)$);
\coordinate (v3) at ($(v2)!1!80:(v)$);
\coordinate (v4) at ($(v3)!1!140:(v2)$);
\coordinate (v1) at ($(v)!1!60:(v2)$);
\draw (v)node[left]{$v$}--(v1)node[above]{$v_{1}$}--(v2)node[right]{$v_{2}$}--(v3)node[right]{$v_{3}$}--(v4)node[below]{$v_{4}/v_{7}$}--(v5)node[left]{$v_{5}$}--cycle;
\draw (v)--(v2);
\coordinate (W) at ($(v1)!1!150:(v2)$);
\draw (v1)--(W)node[left]{$v_{8}$};
\draw (W)[out=90, in=180] to (-2*\s, 3.5*\s) [out=0, in=180] to (2*\s, 3.5*\s) [out=0, in=-90] to (2.2*\s, 3*\s) to [out= -90, in = 0](v4);
\coordinate (SW) at ($(v5)!1!-135:(v4)$);
\draw (v5)--(SW)node[left]{$v_{6}$};
\draw (SW)--(v4);
\foreach \ang in {1,...,5}
{
\node[circle, inner sep = 1.5, fill = white, draw] () at (v\ang) {};
}
\node[circle, inner sep = 1.5, fill, draw] () at (v) {};
\node[circle, inner sep = 1.5, fill = white, draw] () at (W) {};
\node[circle, inner sep = 1.5, fill = white, draw] () at (SW) {};
\node at ($(v)!0.6!-30:(v1)$) {$f_{1}$}; 
\node at (0.2*\s, 0.2*\s) {$f_{2}$};
\node at (-2*\s, 0.2*\s) {$f_{3}$};
\end{tikzpicture}
\caption{Some cases in \autoref{SPECIALVERTEX}. Note that $[v_{4}v_{5}v_{6}]$ does not bound a $3$-face.}
\label{SPECIAL3VERTEX}
\end{figure}

\begin{proof}
(i) By \autoref{ADJACENCY}\ref{3F5} and \ref{3F6}, $f_{1}$ and $f_{2}$ are normally adjacent, $f_{1}$ and $f_{3}$ are normally adjacent. Note that the $6$-face $f_{3}$ is adjacent to the $3$-face $f_{1}$, the boundary of $f_{3}$ is a $6$-cycle. Then $\{v, v_{1}, v_{2}\} \cap \{v_{3}, v_{4}, v_{5}\} = \emptyset$ and $\{v, v_{1}, v_{2}\} \cap \{v_{5}, v_{6}, v_{7}, v_{8}\} = \emptyset$. If $\{v_{3}, v_{4}\} \cap \{v_{6}, v_{7}, v_{8}\} = \emptyset$, then $[v_{1}v_{2}\dots v_{8}]$ is an $8$-cycle, a contradiction. So we may assume that $\{v_{3}, v_{4}\} \cap \{v_{6}, v_{7}, v_{8}\} \neq \emptyset$. If $v_{3} \in \{v_{6}, v_{7}, v_{8}\}$, then there is a $4$-cycle, a contradiction. It follows that $v_{3} \notin \{v_{6}, v_{7}, v_{8}\}$. If $v_{4} = v_{8}$, then $[vv_{1}v_{8}v_{5}]$ is a $4$-cycle, a contradiction. It is observed that $v_{4} \neq v_{6}$, for otherwise $v_{5}$ is a $2$-vertex. Therefore, $v_{4}$ and $v_{7}$ are identical. Note that the $3$-cycle $[v_{4}v_{5}v_{6}]$ does not bound a $3$-face, for otherwise $v_{6}$ is a $2$-vertex. Moreover, $v_{4}v_{5}$ cannot be incident with a $3$-face; otherwise, there exists a $4$-cycle with a chord $v_{4}v_{5}$. It is observed that $vv_{2}$ and $v_{4}v_{5}$ are in triangles; thus, no other edge on $f_{2}$ is contained in a triangle, for otherwise there exists an $8$-cycle. It follows that $f_{2}$ is adjacent to exactly one $3$-face. 

(ii) Since every special $3$-vertex is incident with a $3$-face, the possible other special $3$-vertex on $f_{2}$ is $v_{2}$. By \autoref{SPECIALVERTEX}\ref{SPECIALVERTEX1}, if $v_{2}$ is a special $3$-vertex, then $v_{4}$ should be identified with a vertex on the $6$-face incident with $v_{2}$, and $v_{3}v_{4}$ should be contained in a triangle, but this is impossible. 
\end{proof}

Let $\mu(x)=d(x)-4$ be the initial charge of a vertex or a face $x$, and let $\mu^{*}(x)$ denote the final charge of $x$ after the discharging process. By the Euler's formula, $|V(G)| - |E(G)| + |F(G)| =2$ and $\sum_{v\in V(G)}d(v)=\sum_{f\in F(G)}d(f)=2|E(G)|$, we can derive the following identity: $\sum_{x\in V(G)\bigcup F(G)}\mu(x)=-8$. By the following discharging rules, we shall finally get $\mu^{*}(x) \geq 0$ for all $x \in V(G) \cup F(G)$. Thus a contradiction is obtained and the proof is completed. 

The discharging rules are as follows:

\begin{enumerate}[label = \textbf{R\arabic*.}, ref = R\arabic*]
    \item\label{R1} Each $5^+$-vertex gives $\frac{1}{4}$ to each adjacent bad $3$-vertex.
    \item\label{R2} Each $5^+$-face gives $\frac{1}{3}$ to each adjacent $3$-face.
    \item\label{R3} Each $5$-face gives $\frac{1}{6}$ to each incident good $3$-vertex and $\frac{1}{12}$ to each incident bad $3$-vertex.
    \item\label{R4} Let $f$ be a $6$-face or $7$-face. Then $f$ gives $\frac{1}{2}$ to each incident good $3$-vertex and $\frac{1}{4}$ to each incident bad $3$-vertex.
    \item\label{R5} Each $8^+$-face gives $\frac{5}{6}$ to each incident good $3$-vertex and $\frac{5}{12}$ to each incident bad $3$-vertex.

Let  $\beta(f)$ be the final charge of a $5$-face $f$ after applying the rules \ref{R1}--\ref{R5}.
    \item\label{R6} If $v$ is a special $3$-vertex, then the incident $5$-face $f$ additionally sends $\beta(f)$ to $v$. 
\end{enumerate}

Now, we give a lower bound of $\beta(f)$ in \ref{R6}. 

\begin{lemma}\label{beta}
If $f$ is a $5$-face which is incident with a special $3$-vertex, then $\beta(f) \geq \frac{1}{3}$. 
\end{lemma}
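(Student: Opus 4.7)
The plan is to bound $\beta(f)$ by tracking what a $5$-face can lose before \ref{R6} is applied. The rules that remove charge from a $5$-face under \ref{R1}--\ref{R5} are only \ref{R2} and \ref{R3}: \ref{R1} acts on vertices, while \ref{R4} and \ref{R5} act on $6$-faces, $7$-faces, and $8^{+}$-faces. Since $\mu(f)=1$, it therefore suffices to argue that $f$ loses at most $\frac{2}{3}$ under \ref{R2} and \ref{R3} combined, which will give $\beta(f)\geq\frac{1}{3}$.

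By \autoref{SPECIALVERTEX}~\ref{SPECIALVERTEX1}, $f$ is adjacent to exactly one $3$-face, so \ref{R2} costs $f$ exactly $\frac{1}{3}$; it then remains to bound the total \ref{R3} contribution from $f$ by $\frac{1}{3}$. I would label $f = [vv_{2}v_{3}v_{4}v_{5}]$ following \autoref{SPECIAL3VERTEX}. The vertex $v_{4} = v_{7}$ has four distinct neighbors $v_{3}, v_{5}, v_{6}, v_{8}$, so it is a $4^{+}$-vertex and receives nothing via \ref{R3}. By \autoref{SPECIALVERTEX}~\ref{SPECIALVERTEX2}, no vertex of $f$ other than $v$ is special, so every recipient of \ref{R3} from $f$ lies in $\{v, v_{2}, v_{3}, v_{5}\}$ and gets $\frac{1}{6}$ (if good) or $\frac{1}{12}$ (if bad).

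I would then split on whether $v$ itself is good or bad. If $v$ is good, none of its neighbors $v_{1}, v_{2}, v_{5}$ is a $3$-vertex, so the \ref{R3} recipients on $f$ are $v$ and possibly $v_{3}$, each contributing at most $\frac{1}{6}$, giving a total of at most $\frac{1}{3}$. If $v$ is bad, then \autoref{BADNEIGHBOR} forces two of $v_{1}, v_{2}, v_{5}$ to be $5^{+}$-vertices, so at most one of $v_{2}, v_{5}$ can be a $3$-vertex, and any such neighbor is itself bad because it is adjacent to the $3$-vertex $v$. Hence $v$ and any $3$-vertex in $\{v_{2}, v_{5}\}$ together contribute at most $\frac{1}{12}+\frac{1}{12}=\frac{1}{6}$, and adding the at-most-$\frac{1}{6}$ contribution from $v_{3}$ again yields at most $\frac{1}{3}$.

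The bound is tight in the ``good $v$'' case when $v_{3}$ is also a good $3$-vertex, so the argument cannot afford any slack; this is precisely why both parts of \autoref{SPECIALVERTEX} are needed (exactly one adjacent $3$-face and no further special vertex on $f$), and why the structural consequence $d(v_{4})\geq 4$ extracted from \autoref{SPECIAL3VERTEX} is essential. The only real obstacle is the ``bad $v$'' case, which is resolved by invoking \autoref{BADNEIGHBOR} to prevent $v_{2}$ and $v_{5}$ from both being $3$-vertices simultaneously.
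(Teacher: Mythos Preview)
Your argument is correct, but the paper's proof is shorter and structured differently. Rather than exploiting $d(v_{4})\geq 4$ and casing on whether $v$ is good or bad, the paper simply cases on the total number of $3$-vertices incident with $f$: if there are at most two, each costs at most $\frac{1}{6}$ under \ref{R3}; if there are at least three, then (since any three vertices on a $5$-cycle contain an adjacent pair) \autoref{BADNEIGHBOR} forces exactly three $3$-vertices, two of them bad and one good, for an \ref{R3} cost of $\frac{1}{6}+2\cdot\frac{1}{12}=\frac{1}{3}$. That argument uses only part~\ref{SPECIALVERTEX1} of \autoref{SPECIALVERTEX} and would in fact bound $\beta(f)$ for \emph{any} $5$-face adjacent to exactly one $3$-face. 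Your route instead leans on the concrete configuration in \autoref{SPECIAL3VERTEX} (the identification $v_{4}=v_{7}$) to eliminate one boundary vertex up front; both reach the same tight bound.

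One small correction: your appeal to \autoref{SPECIALVERTEX}~\ref{SPECIALVERTEX2} is unnecessary, and the word ``so'' following it is a non sequitur. That the \ref{R3} recipients lie in $\{v,v_{2},v_{3},v_{5}\}$ follows solely from $d(v_{4})\geq 4$, which you had already derived; uniqueness of the special vertex on $f$ plays no role in computing $\beta(f)$ (it matters only for \ref{R6}, which by definition is applied after $\beta(f)$ is fixed). So contrary to your closing remark, part~\ref{SPECIALVERTEX2} is not needed here.
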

\begin{proof}
By \autoref{SPECIALVERTEX}\ref{SPECIALVERTEX1}, the $5$-face is adjacent to exactly one $3$-face. If the $5$-face is incident with at most two $3$-vertices, then $\beta(f) \geq 1 - \frac{1}{3} - \frac{1}{6} \times 2 = \frac{1}{3}$ by \ref{R2} and \ref{R3}. If the $5$-face is incident with at least three $3$-vertices, then the $5$-face is incident with exactly three $3$-vertices in which two of them are bad $3$-vertices by \autoref{BADNEIGHBOR}. It follows that $\beta(f) \geq 1 -\frac{1}{3} - \frac{1}{6} - \frac{1}{12} \times 2 = \frac{1}{3}$ by \ref{R2} and \ref{R3}. 
\end{proof}

Recall that every vertex $v$ is a $3^{+}$-vertex.

Consider a good $3$-vertex $v$. If $v$ is incident with at least two $6^{+}$-faces, then $\mu^{*}(v) \geq \mu(v) + \frac{1}{2} \times 2 = 0$ by \ref{R4} and \ref{R5}. So we may assume that $v$ is incident with at least two $5^{-}$-faces. By \autoref{ADJACENCY}\ref{5F5} and \ref{3F3}, $v$ is incident with a $3$-face and a $5$-face. If $v$ is incident with an $8^+$-face, then $\mu^{*}(v) = \mu(v) + \frac{1}{6} + \frac{5}{6} = 0$ by \ref{R3} and \ref{R5}. Otherwise, $v$ is incident with a $3$-face, a $5$-face $f$, and a $6$-face by \autoref{ADJACENCY}\ref{3F7}, \ie $v$ is a special $3$-vertex. Then $\mu^{*}(v) = \mu(v) + \frac{1}{6} + \frac{1}{2} + \beta(f) \geq 0$ by \ref{R3}, \ref{R4}, \ref{R6}, and \autoref{beta}.

Consider a bad $3$-vertex $v$. By \autoref{BADNEIGHBOR}, $v$ is adjacent to two $5^{+}$-vertices. If $v$ is incident with at least two $6^+$-faces, then $\mu^{*}(v) \geq \mu(v) + \frac{1}{4} \times 2 + \frac{1}{4} \times 2 = 0$ by \ref{R1}, \ref{R4}, and \ref{R5}. Then $v$ is incident with at least two $5^{-}$-faces. By \autoref{ADJACENCY}\ref{5F5} and \ref{3F3}, $v$ is incident with a $3$-face and a $5$-face. If $v$ is incident with an $8^+$-face, then $\mu^{*}(v) = \mu(v) + \frac{1}{4} \times 2 + \frac{1}{12} + \frac{5}{12} = 0$ by \ref{R1}, \ref{R3}, and \ref{R5}. Otherwise, $v$ is incident with a $3$-face, a $5$-face $f$, and a $6$-face by \autoref{ADJACENCY}\ref{3F7}, \ie $v$ is a special $3$-vertex. Then $\mu^{*}(v) = \mu(v) + \frac{1}{4} \times 2 + \frac{1}{12} + \frac{1}{4} + \beta(f) > 0$ by \ref{R1}, \ref{R3}, \ref{R4}, \ref{R6}, and \autoref{beta}.

If $v$ is a $4$-vertex, then it is not involved in a discharging process and thus $\mu^{*}(v)=\mu(v)=0$.

Consider a $5$-vertex $v$. If $v$ is adjacent to a bad $3$-vertex, say $u$, then $v$ has a $4^+$-neighbor by \autoref{BADNEIGHBOR}. Consequently, $v$ is adjacent to at most four bad $3$-vertices. Thus $\mu^{*}(v) \geq \mu(v) - \frac{1}{4} \times 4 = 0$ by \ref{R1}.

Consider a $d$-vertex $v$ where $d \geq 6$. Then $\mu^{*}(v)\geq \mu(v) - d \times \frac{1}{4} = (d - 4) - d \times \frac{1}{4} > 0$ by \ref{R1}.

Let $f$ be a $k$-face. 

$\bullet$ $\bm{k = 3}$. It follows from \autoref{ADJACENCY}\ref{3F3} that $f$ is adjacent to three $5^{+}$-faces, and $\mu^{*}(f) = \mu(f) + 3 \times \frac{1}{3} = 0$ by \ref{R2}.

$\bullet$ $\bm{k = 4}$. Since $G$ does not contain a $4$-cycle, it does not contain a $4$-face. 

$\bullet$ $\bm{k = 5}$. It follows from \autoref{ADJACENCY}\ref{5F33} that $f$ is  adjacent to at most two $3$-faces. Suppose that $f$ is incident with a special $3$-vertex. By \autoref{SPECIALVERTEX}\ref{SPECIALVERTEX2}, $f$ is incident with exactly one special $3$-vertex. By \ref{R6} and \autoref{beta}, we get $\mu^{*}(f) = 0$. So we may assume that $f$ is not incident with a special $3$-vertex. If $f$ is incident with at most two $3$-vertices, then $\mu^{*}(f) \geq \mu(f) - \frac{1}{3} \times 2 - \frac{1}{6} \times 2 = 0$ by \ref{R2} and \ref{R3}. If $f$ is incident with at least three $3$-vertices, then $f$ is incident with exactly three $3$-vertices in which two of them are bad $3$-vertices by \autoref{BADNEIGHBOR}. It follows that $\mu^{*}(f) \geq \mu(f) - \frac{1}{3} \times 2 - \frac{1}{6} - \frac{1}{12} \times 2 = 0$ by \ref{R2} and \ref{R3}.

$\bullet$ $\bm{k = 6}$. It follows from \autoref{ADJACENCY}\ref{6F3} that $f$ is  adjacent to at most one $3$-face. If $f$ is incident with at most three $3$-vertices, then $\mu^{*}(f) \geq \mu(f) - \frac{1}{3} - \frac{1}{2} \times 3 = \frac{1}{6} > 0$ by \ref{R2} and \ref{R4}. If $f$ is incident with at least four $3$-vertices, then $f$ is incident with exactly four $3$-vertices in which all of them are bad $3$-vertices by \autoref{BADNEIGHBOR}. It follows that $\mu^{*}(f) \geq \mu(f) - \frac{1}{3} - \frac{1}{4} \times 4 = \frac{2}{3} > 0$ by \ref{R2} and \ref{R4}.

$\bullet$ $\bm{k = 7}$. If $f$ is not a simple face, then $G$ contains a $4$-cycle, a contradiction. So we may assume that $f$ is a simple face. Then $f$ is bounded by a $7$-cycle. It follows from \autoref{ADJACENCY}\ref{3F7} that $f$ is not adjacent to any $3$-face. By \autoref{BADNEIGHBOR}, $f$ is incident with at most four $3$-vertices. It follows that $\mu^{*}(f) \geq \mu(f) - \frac{1}{2} \times 4 > 0$ by \ref{R4}.

$\bullet$ $\bm{k = 8}$. If $f$ is a simple face, then $G$ contains an $8$-cycle, a contradiction. So $f$ is not a simple face, its boundary consists of a $3$-cycle and a $5$-cycle, or two $3$-cycles and a cut edge. It follows from \autoref{ADJACENCY}\ref{3F3} and \ref{5F33} that $f$ is adjacent to at most two $3$-faces. By \autoref{BADNEIGHBOR}, $f$ is incident with at most five $3$-vertices. If $f$ is incident with at most four $3$-vertices, then $\mu^{*}(f) \geq \mu(f) - \frac{1}{3} \times 2 - \frac{5}{6} \times 4 = 0$ by \ref{R2} and \ref{R5}. If $f$ is incident with five $3$-vertices, then at least four of the $3$-vertices are bad by \autoref{BADNEIGHBOR}. It follows that $\mu^{*}(f) \geq \mu(f) - \frac{1}{3} \times 2 - \frac{5}{6} - \frac{5}{12} \times 4 = \frac{5}{6} > 0$ by \ref{R2} and \ref{R5}. 

$\bullet$ $\bm{k \geq 9}$. It follows from \autoref{ADJACENCY}\ref{3F3} that a $3$-vertex is incident with at least two $4^+$-faces. If $f$ is a $9$-face incident with exactly four good $3$-vertices, then $f$ is adjacent to at most five $3$-faces and $f$ is not incident with a bad $3$-vertex, thus $\mu^{*}(f) \geq \mu(f)-\frac{1}{3}\times5-\frac{5}{6}\times4 = 0$ by \ref{R2} and \ref{R5}. So we may assume that $f$ is not a $9$-face incident with exactly four good $3$-vertices. In what follows, if $f$ is a $9$-face, then it is incident with at most three good $3$-vertices. 

Let $v_{1}, v_{2}, \dots, v_{k}$ be the vertices on the boundary of $f$, and let $f_i$ be the face sharing an edge $v_iv_{i+1}$ with $f$, where all the subscripts are taken modulo $k$. In order to easily check the final charge of $f$, we treat some transfer from an element to another element via some agents. Firstly, $f$ sends $\frac{1}{2}$ to each vertex $v_{i}$ and sends an extra $\frac{1}{6}$ to each good $3$-vertex $v_{i}$. Next, $v_{i}$ may play the role of agent. If $f_{i}$ is a $3$-face, then the agent $v_{i}$ sends $\frac{1}{6}$ to $f_{i}$, and the agent $v_{i+1}$ sends $\frac{1}{6}$ to $f_{i}$, which corresponds to \ref{R2} that $f$ sends $2 \times \frac{1}{6} = \frac{1}{3}$ to $f_{i}$. 

Suppose that $v_{i}$ is a $3$-vertex incident with $4^{+}$-vertices $v_{i-1}$ and $v_{i+1}$. Then the agent $v_{i-1}$ sends $\frac{1}{4}$ to $v_{i}$ if $f_{i-1}$ is a $4^{+}$-face; otherwise, the agent $v_{i-1}$ sends $\frac{1}{4} - \frac{1}{6}$ to $v_{i}$. Similarly, the agent $v_{i+1}$ sends $\frac{1}{4}$ or $\frac{1}{4} - \frac{1}{6}$ to $v_{i}$. Note that the $3$-vertex $v_{i}$ is incident with at most one $3$-face; thus, $f$ sends at least $(\frac{1}{2} - \frac{1}{6}) + \frac{1}{6} + \frac{1}{4} + (\frac{1}{4} - \frac{1}{6}) = \frac{5}{6}$ to $v_{i}$, which corresponds to the first part of \ref{R5}. 

Suppose that $v_{i}$ is a $3$-vertex, and one of $v_{i-1}$ and $v_{i+1}$ is also a $3$-vertex. By symmetry, let $v_{i+1}$ be a $3$-vertex. Then the agent $v_{i-1}$ sends $\frac{1}{4}$ or $\frac{1}{4} - \frac{1}{6}$ to $v_{i}$, and then $f$ sends at least $(\frac{1}{2} - \frac{1}{6}) + (\frac{1}{4} - \frac{1}{6}) = \frac{5}{12}$ in total to $v_{i}$, which corresponds to the second part in \ref{R5}. 

For each $4^{+}$-vertex $v_{i}$, when it plays the role of agent, it receives $\frac{1}{2}$ from $f$ and gives at most $2(\frac{1}{4} - \frac{1}{6}) + 2 \times \frac{1}{6} = \frac{1}{2}$. 

So we can treat $f$ sends $\frac{1}{2}$ to each vertex $v_{i}$, and $v_{i}$ maybe plays the role of agent to redistribute at most $\frac{1}{2}$ to incident $3$-faces and $3$-vertices; additionally $f$ sends an extra $\frac{1}{6}$ to each good $3$-vertices. 

If $f$ is a $9$-face incident with at most three good $3$-vertices, then $\mu^{*}(f)\geq \mu(f)-\frac{1}{2} \times 9 - \frac{1}{6} \times 3 = 0$. If $f$ is a $10^+$-face, then $f$ is incident with at most $\frac{k}{2}$ good $3$-vertices, and then $\mu^{*}(f) \geq \mu(f) - \frac{1}{2} \times k -\frac{1}{6} \times \frac{k}{2} = \frac{1}{6} > 0$.

This completes the proof. 

\section{Plane graphs without 4- and 6-cycles}
In this section, we prove the second main result---\autoref{AB}.

Assume that $G$ is a counterexample to \autoref{AB}, but all of its proper induced subgraphs are DP-$B_{A}$-$3$-colorable. By \autoref{delta}, the minimum degree of $G$ is at least three. Since $G$ has no $4$- or $6$-cycles, the following statements hold. 

\begin{lemma}\label{6G3} 
A $3$-face is not adjacent to a $6^{-}$-face. 
\end{lemma}

\begin{proof} 
If two $3$-faces are adjacent, then $G$ has a $4$-cycle, a contradiction.

Suppose that a $5$-face $[v_{1}v_{2}v_{3}v_{4}v_{5}]$ is adjacent to a $3$-face $[v_1v_2u]$. Since there is no 6-cycle, $u \in \{v_{3}, v_{4}, v_{5}\}$. But the $5$-cycle has a chord, then there is a $4$-cycle, a contradiction. 
 
Since there is no $6$-cycle in $G$, the boundary of a $6$-face consists of two triangles. Let $f = [u'vuwvw']$ be a $6$-face, where $[uvw]$ and $[u'vw']$ are two triangles. Observe that $G$ has no adjacent triangles. Suppose that $f$ is adjacent to a $3$-face. Then either $[uvw]$ or $[u'vw']$ bounds a $3$-face, and then there are at least two $2$-vertices, a contradiction. 
\end{proof}
We once again use the discharging method to complete the proof. Let $\mu(x)=d(x)-4$ be the initial charge of a vertex or a face $x$, and let $\mu^{*}(x)$ denote the final charge of $x$ after the discharging procedure. According to the Euler's formula and handshaking theorem, the sum of the initial charge is $-8$. By the following discharging rules, we should finally get $\mu^{*}(x)\geq 0$ for all $x\in V(G)\cup F(G)$. Thus a contradiction is obtained and the counterexample does not exist.

The discharging rules are as follows:
\begin{enumerate}[label = \textbf{R\arabic*.}, ref = R\arabic*]
    \item 
    \label{R.1}
    Each $5^+$-vertex gives $\frac{1}{4}$ to each adjacent bad $3$-vertex.
     \item 
     \label{R.2}
     Each $7^+$-face gives $\frac{1}{3}$ to each adjacent $3$-face.
     \item 
     \label{R.3}
     Each $5$-face gives $\frac{1}{3}$ to each incident good $3$-vertex and $\frac{1}{6}$ to each incident bad $3$-vertex.
     \item 
     \label{R.4}
     Each $6^+$-face gives $\frac{1}{2}$ to each incident good $3$-vertex and $\frac{1}{4}$ to each incident bad $3$-vertex.
\end{enumerate}

Recall that every vertex $v$ is a $3^+$-vertex.

Consider a good $3$-vertex $v$. If $v$ is not incident with a $3$-face, then $v$ is incident with three $5^+$-faces, which implies that $\mu^{*}(v) \geq \mu(v) + \frac{1}{3} \times 3 = 0$ by \ref{R.3} and \ref{R.4}. If $v$ is incident with a $3$-face, then the other two faces are $7^+$-faces by \autoref{6G3}, and then $\mu^{*}(v) = \mu(v) + \frac{1}{2} \times 2 = 0$ by \ref{R.4}.

Consider a bad $3$-vertex $v$. If $v$ is not incident with a $3$-face, then $v$ is incident with three $5^+$-faces, and then $\mu^{*}(v) \geq \mu(v) + \frac{1}{4} \times 2 + \frac{1}{6} \times 3 = 0$ by \ref{R.1}, \ref{R.3}, and \ref{R.4}. If $v$ is incident with a $3$-face, then the other two faces are $7^+$-faces by \autoref{6G3}, and then $\mu^{*}(v) = \mu(v)+\frac{1}{4} \times 2 + \frac{1}{4} \times 2 = 0$ by \ref{R.1} and \ref{R.4}.

If $v$ is a $4$-vertex, then it does not involve in a discharging process and thus $\mu^{*}(v)=\mu(v)=0$.

Consider a $5$-vertex $v$. If $v$ is adjacent to a bad $3$-vertex, say $u$, then $v$ has a $4^+$-neighbor by \autoref{BADNEIGHBOR}. Consequently, $v$ is adjacent to at most four bad $3$-vertices. Then $\mu^{*}(v) \geq \mu(v) - 4 \times \frac{1}{4} = 0$ by \ref{R.1}.

Consider a $d$-vertex $v$ where $d \geq 6$. Then $\mu^{*}(v)\geq \mu(v) - d \times \frac{1}{4} = (d - 4) - d \times \frac{1}{4} > 0$ by \ref{R.1}.

Let $f$ be a $k$-face.

$\bullet$ $\bm{k = 3}$. It follows from \autoref{6G3} that $f$ is adjacent to three $7^{+}$-faces. Then $\mu^{*}(f)=\mu(f)+3\times\frac{1}{3}=0$ by \ref{R.2}.

$\bullet$ $\bm{k = 4}$. Since $G$ does not contain a $4$-cycle, it does not contain a $4$-face.

$\bullet$ $\bm{k = 5}$. It follows from \autoref{6G3} that $f$ is not adjacent to any $3$-face. If $f$ is incident with at most two $3$-vertices, then $\mu^{*}(f) \geq \mu(f) - \frac{1}{3} \times 2 > 0$ by \ref{R.3}. If $f$ is incident with at least three $3$-vertices, then it is incident with exactly three $3$-vertices in which two of them are bad $3$-vertices by \autoref{BADNEIGHBOR}. It follows that $\mu^{*}(f) \geq \mu(f) - \frac{1}{6} \times 2 - \frac{1}{3} > 0$ by \ref{R.3}.

$\bullet$ $\bm{k = 6}$. It follows from \autoref{6G3} that $f$ is not adjacent to a $3$-face. Since there are no $6$-cycles in $G$, the boundary of $f$ consists of two triangles, and $f$ is incident with at most four $3$-vertices. If $f$ is incident with at most three $3$-vertices, then $\mu^{*}(f) \geq \mu(f) - \frac{1}{2} \times 3 = \frac{1}{2} > 0$ by \ref{R.4}. If $f$ is incident with four $3$-vertices, then each $3$-vertex is bad, and then $\mu^{*}(f)=\mu(f)-\frac{1}{4}\times4=1>0$ by \ref{R.4}.

$\bullet$ $\bm{k \geq 7}$. Similar to the proof in \autoref{DECOM}, we can treat $f$ sends $\frac{3}{7}$ to each incident vertex and redistribute at most $\frac{3}{7}$ to incident $3$-faces and $3$-vertices. Thus, $\mu^{*}(f) \geq \mu(f) - \frac{3}{7} \times k \geq 0$.

This completes the proof. 

\bigskip
\noindent\fcolorbox{blue}{red!30}{\parbox{\textwidth}{\textbf{Added Note to the Published Version}: Upon further examination of the definition of DP-$B_{A}$-coloring, it is evident that for every $(v, 1) \in T$, there are no neighbors in $T_{i} \coloneqq T \cap \{(u, 1): (u, 1) \in T\}$. However, it is possible for $(v, 1)$ to have other neighbors in $T$. As a result, it is incorrect to claim that DP-$B_{A}$-$3$-colorability implies DP-$(0, 2, 2)$-colorability. Consequently, the assertions made in \autoref{Cor1}\ref{c.1} and \autoref{Cor2}\ref{c.2} are invalid. 

\mbox{---} On the other hand, we claim that $H[T]$ is a forest if $T$ is a DP-$B_{A}$-coloring. Let $x_{1}, x_{2}, \dots, x_m$ represent the desired ordering of the vertices in $T$. Suppose that $x_{r}$ is in a cycle with the largest subscript. By the choice of the subscript $r$, the vertex $x_{r}$ must have at least two neighbors on the left, but this contradicts the definition of DP-$B_{A}$-coloring. A DP-$(\mathcal{F}_{d_{1}}, \mathcal{F}_{d_{2}}, \dots, \mathcal{F}_{d_{k}})$-coloring is defined as a transversal $T$, where $H[T]$ is a forest and every vertex $(v, i) \in T$ has at most $d_{i}$ neighbors in $T$. We say that a graph $G$ is DP-$(\mathcal{F}_{d_{1}}, \mathcal{F}_{d_{2}}, \dots, \mathcal{F}_{d_{k}})$-colorable if every cover of $G$ with a $k$-list assignment has a DP-$(\mathcal{F}_{d_{1}}, \mathcal{F}_{d_{2}}, \dots, \mathcal{F}_{d_{k}})$-coloring. Similarly, we can define a weak version of DP-$(\mathcal{F}_{d_{1}}, \mathcal{F}_{d_{2}}, \dots, \mathcal{F}_{d_{k}})$-coloring.  A weak DP-$(\mathcal{F}_{d_{1}}, \mathcal{F}_{d_{2}}, \dots, \mathcal{F}_{d_{k}})$-coloring is a transversal $T$, where $H[T]$ is a forest and every vertex $(v, i) \in T$ has at most $d_{i}$ neighbors in $T_{i}$. We say that a graph $G$ is weakly DP-$(\mathcal{F}_{d_{1}}, \mathcal{F}_{d_{2}}, \dots, \mathcal{F}_{d_{k}})$-colorable if every cover of $G$ with a $k$-list assignment has a weak DP-$(\mathcal{F}_{d_{1}}, \mathcal{F}_{d_{2}}, \dots, \mathcal{F}_{d_{k}})$-coloring. It can be easily verified that DP-$B_{A}$-$3$-colorability implies weak DP-$(\mathcal{F}_{0}, \mathcal{F}_{2}, \mathcal{F}_{2})$-colorability. For further discussions, we refer the reader to [P.~Sittitrai, K.~M. Nakprasit and K.~Nakprasit, A weak {DP}-partitioning of planar graphs without 4-cycles and 6-cycles, Bull. Malays. Math. Sci. Soc. 46~(4) (2023) 141]. }}

\section*{Declarations}

\noindent\textbf{Conflict of Interest} The authors have no relevant financial or non-financial interests to disclose.

\end{document}